\let\oldmarginpar\marginpar
\renewcommand\marginpar[1]{\-\oldmarginpar[\raggedleft\footnotesize #1]
{\raggedright\footnotesize #1}}
\theoremstyle{plain}
\newtheorem{thm}{Theorem}[section]
\newtheorem{cor}[thm]{Corollary}
\newtheorem{lemma}[thm]{Lemma}
\theoremstyle{definition}
\DeclareMathOperator{\SL}{SL} \DeclareMathOperator{\PSL}{PSL}
 \DeclareMathOperator{\Mat}{M}
\DeclareMathOperator{\Ram}{Ram}
\DeclareMathOperator{\Br}{Br}
\DeclareMathOperator{\Inv}{Inv}
\DeclareMathOperator{\Res}{Res}
\newcommand{\bdef}{\overset{\text{def}}{=}}
\newcommand{\nid}{\noindent}
\newcommand{\tss}{\smallskip\smallskip}
\newcommand{\iny}{\infty}
\newcommand{\abs}[1]{\left\vert#1\right\vert}
\newcommand{\set}[1]{\left\{#1\right\}}
\newcommand{\brac}[1]{\left[#1\right]}
\newcommand{\pr}[1]{\left( #1 \right) }
\newcommand{\su}{\subset}
\newcommand{\lra}{\longrightarrow}
\newcommand{\B}[1]{\ensuremath{\mathbf{#1}}}
\newcommand{\BB}[1]{\ensuremath{\mathbb{#1}}}
\newcommand{\Cal}[1]{\ensuremath{\mathcal{#1}}}
\newcommand{\Hy}{\ensuremath{\B{H}}}
\newcommand{\Q}{\ensuremath{\B{Q}}}
\newcommand{\R}{\ensuremath{\B{R}}}
\newcommand{\Z}{\ensuremath{\B{Z}}}
\newcommand{\C}{\ensuremath{\B{C}}}
\begin{document}


\title{\textbf{Geometric spectra and commensurability}}
\author{D. B. McReynolds\thanks{The work was partial supported by an NSF grant}}
\maketitle

\begin{abstract}
\nid The work of Reid, Chinburg--Hamilton--Long--Reid, Prasad--Rapinchuk, and the author with Reid have demonstrated that geodesics or totally geodesic submanifolds can sometimes be used to determine the commensurability class of an arithmetic manifold. The main results of this article show that generalizations of these results to other arithmetic manifolds will require a wide range of data. Specifically, we prove that certain incommensurable arithmetic manifolds arising from the semisimple Lie groups of the form $(\SL(d,\R))^r \times (\SL(d,\C))^s$ have the same commensurability classes of totally geodesic submanifolds coming from a fixed field. This construction is algebraic and shows the failure of determining, in general, a central simple algebra from subalgebras over a fixed field. This, in turn, can be viewed in terms of forms of $\SL_d$ and the failure of determining the form via certain classes of algebraic subgroups.
\end{abstract}

\section{Introduction}

\nid The present article addresses the following general geometric question. \tss

\nid \textbf{Question 1.}
\emph{How much of the geometry of a Riemmanian manifold $M$ is encoded in the geometry of the totally geodesic submanifolds of $M$?}\tss

\nid This question was the focus of the recent article \cite{McR} where two general results were shown for hyperbolic 3--manifolds. First, two arithmetic hyperbolic 3--manifolds $M_1,M_2$ with the same totally geodesic surfaces, up to commensurability, are commensurable provided they have a single totally geodesic surface. Second, given any finite volume hyperbolic 3--manifold $M$, there exist finite covers $M_1,M_2$ of $M$ with precisely the same totally geodesic surfaces (counted with multiplicity).\tss 

\nid The article \cite{McR} was motivated by analogous results in spectral geometry where the focuses are the geodesic length spectrum of $M$ and the spectrum of the Laplace--Beltrami operator acting on $L^2(M)$. Reid \cite{Reid} proved that if $X,Y$ are Riemann surfaces with the same geodesic length spectrum and $X$ is arithmetic, then $X,Y$ are commensurable. In particular, $Y$ is also arithmetic. Via Selberg's trace formula, one gets an identical result for the eigenvalue spectrum of the Laplace--Beltrami operator acting on $L^2(X)$. We refer to these results as commensurability rigidity since the spectral data is sufficient for determining the commensurability class of the manifold. More recently, Chinburg--Hamilton--Long--Reid \cite{CHLR} proved that the geodesic length spectrum for arithmetic hyperbolic 3--manifolds also enjoys the same commensurability rigidity. Specifically, two arithmetic hyperbolic 3--manifolds with the same geodesic length spectrum are commensurable. Prasad--Rapinchuk \cite{PR} have determined for a large class of locally symmetric manifolds when commensurability rigidity holds. It is worth noting that it does not always hold; see also \cite{Linowitz} and \cite{LSV}. \tss

\nid These results all have algebraic analogs that are the primary tools in the above rigidity results. For instance, the main algebraic observation in \cite{McR} was a similar result about quaternion algebras. Let $A_1,A_2$ be quaternion algebras over number fields $K_j$ with subfield $F \su K_j$ with $[K_j:F]=2$. We can associate to $A_1,A_2$ the sets of quaternion algebras $B$ over $F$ such that $A_j = B \otimes_F K_j$. In \cite{McR}, we proved that these sets determine the algebras provided they are non-empty. Namely, if these sets are equal and non-empty, then $A_1 \cong A_2$. In particular, the fields $K_1,K_2$ are isomorphic. Similarly, Reid \cite{Reid}, in proving commensurability rigidity for the geodesic length spectrum, proved that the splitting fields of the invariant quaternion algebra determine the invariant quaternion algebra. See \cite{CRR}, \cite{GS}, and \cite{Meyer} for results describing the extent of the failure of this statement for general algebras and also generalizations of these rigidity results. \tss

\nid The following algebraic questions also serves as motivation presently.\tss

\nid \textbf{Question 2.} \emph{How much of the structure of a central simple algebra $A$ is encoded in the subalgebras of $A$? How much of the structure of an algebraic group $\B{G}$ is encoded in the algebraic subgroups of $\B{G}$?}\tss

\nid The work of Prasad--Rapinchuk addressed the second question for maximal subtori of almost absolutely simple algebraic groups $\B{G}$. The work in \cite{McR} focused on certain $\SL_2$--forms over real fields for certain algebraic forms of $\SL_2$.\tss

\nid This article continues this theme by demonstrating via example that generalizations of the above rigidity results need a large range of geometric (or algebraic) data. Recall that for an extension $K/F$, we have a map on Brauer groups (see Section 2 for the definitions)
\[ \Res_{K/F}\colon \Br(F) \lra \Br(K) \]
given by
\[ \Res_{K/F}(B) = B \otimes_F K. \]
With this notation set, we have the following result.

\begin{thm}\label{MainQuat}
There exist infinitely many pairs of number fields $K,K'$ and infinitely many pairs of central simple division algebras $A,A'$ over $K,K'$, respectively such that
\[ (\Res_{K/\Q})^{-1}(A) = (\Res_{K'/\Q})^{-1}(A') \ne \emptyset. \]
There exist infinitely many distinct pairs $A,A'$ for a fixed degree $d$ and pairs $A,A'$ for every degree $d\geq 2$.
\end{thm}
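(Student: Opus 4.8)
The plan is to realize $A$ and $A'$ as the restrictions to $K$ and to $K'$ of one and the same class $B\in\Br(\Q)$, and to choose the number fields $K,K'$ so that the two restriction maps have, quite literally, the same fibre through $B$.

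I would start from the Albert--Brauer--Hasse--Noether description: a class $B\in\Br(\Q)$ is determined by its local invariants $\mathrm{inv}_v(B)\in\Q/\Z$ (with $v$ running over the places of $\Q$, the value at the real place lying in $\tfrac12\Z/\Z$), almost all zero and summing to $0$; and for a finite extension $K/\Q$ one has $\mathrm{inv}_w(\Res_{K/\Q}B)=[K_w:\Q_v]\,\mathrm{inv}_v(B)$ whenever $w\mid v$. Two consequences are immediate. First, if $(\Res_{K/\Q})^{-1}(A)$ is nonempty it is a coset of the relative Brauer group $\Br(K/\Q)=\ker(\Res_{K/\Q})$. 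Second, under the embedding $\Br(\Q)\hookrightarrow\bigoplus_v\Br(\Q_v)$ by local invariants, $\Br(K/\Q)$ is exactly the subgroup of sequences $(\beta_v)$ with $\beta_v$ of order dividing $g_v(K):=\gcd_{w\mid v}[K_w:\Q_v]$; in particular $\Br(K/\Q)$ depends only on the function $v\mapsto g_v(K)$. Combining these: if $K$ and $K'$ satisfy $g_v(K)=g_v(K')$ at every place $v$, and $B\in\Br(\Q)$ is chosen so that the underlying division algebras $A$ of $B\otimes_\Q K$ and $A'$ of $B\otimes_\Q K'$ both have degree $d$, then
\[ (\Res_{K/\Q})^{-1}(A)=B+\Br(K/\Q)=B+\Br(K'/\Q)=(\Res_{K'/\Q})^{-1}(A')\neq\emptyset. \]

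For the fields I would take $K,K'$ to be non-isomorphic arithmetically equivalent number fields: the classical examples of Gassmann (the smallest in degree $8$), together with their composita with suitable auxiliary Galois extensions, furnish infinitely many non-isomorphic such pairs. Non-isomorphic arithmetically equivalent fields have the same decomposition type of every rational prime (including ramification indices) and the same signature --- equivalently $K\otimes_\Q\Q_v\cong K'\otimes_\Q\Q_v$ for every place $v$ --- so $g_v(K)=g_v(K')$ for all $v$, which is exactly what the previous paragraph requires. To build $B$: fix $d\geq 2$ and, by Chebotarev, pick two distinct rational primes $p\neq q$ that split completely in $K$; by arithmetic equivalence they also split completely in $K'$. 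Let $B\in\Br(\Q)$ be the class with $\mathrm{inv}_p(B)=1/d$, $\mathrm{inv}_q(B)=-1/d$, and all other local invariants $0$; this is a legitimate element of $\Br(\Q)$ since the invariants are supported at two finite places and sum to $0$. As $p,q$ split completely in $K$, the class $B\otimes_\Q K$ has invariant $1/d$ at each of the $n=[K:\Q]$ primes above $p$, invariant $-1/d$ at each prime above $q$, and $0$ elsewhere, so its index equals $\operatorname{lcm}$ of the local orders, namely $d$: the underlying division algebra $A$ has degree exactly $d$. The identical computation over $K'$ produces a division algebra $A'$ of degree $d$, and the displayed equality of fibres holds. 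Because $K\not\cong K'$ there is no $\Q$-algebra isomorphism $A\to A'$, and the arithmetic manifolds obtained from $\SL_1$ of these algebras are incommensurable.

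Finally, the enumeration clauses. The construction goes through verbatim for every $d\geq 2$. For a fixed $d$, letting $p$ range over the infinitely many rational primes that split completely in $K$ (with $q$ held fixed) yields infinitely many distinct algebras $A$, since their ramification loci over $K$ --- the primes of $K$ above $p$ and $q$ --- are pairwise distinct; hence infinitely many distinct pairs $(A,A')$, and infinitude of the set of field pairs $(K,K')$ was noted above. The step I expect to be the main obstacle is the verification, in the third paragraph, that $\Br(K/\Q)=\Br(K'/\Q)$: this rests on the fact that non-isomorphic arithmetically equivalent number fields are everywhere locally isomorphic (the point to check with care being the ramified primes, where the relevant decomposition groups need not be cyclic). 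Once the relative Brauer group is identified as a purely local invariant of $K$ and this coincidence of local data is in hand, the explicit choice of $B$ and the bookkeeping for the counting assertions are routine consequences of class field theory.
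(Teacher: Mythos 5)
Your overall strategy---realizing $A$ and $A'$ as restrictions of a single class $B\in\Br(\Q)$ and identifying each fibre as the coset $B+\Br(K/\Q)$, with $\Br(K/\Q)$ determined by the local data $g_v(K)=\gcd_{w\mid v}[K_w:\Q_v]$---is sound, and it is a cleaner, more structural packaging of the place-by-place verification the paper carries out. The genuine gap is the input you feed it: it is \emph{not} true that arithmetically equivalent number fields are everywhere locally isomorphic, nor even that they satisfy $g_v(K)=g_v(K')$ at every place. Arithmetic equivalence controls splitting data only at unramified primes; at ramified primes the local degrees can differ. Indeed the paper's proof of Theorem \ref{AlmostEqual} is built on exactly this failure: for Perlis's degree-$8$ arithmetically equivalent pair one has $(2)_K=Q_1^2Q_2^2Q_3^2Q_4^2$ while $(2)_{K'}=P_1P_2P_3^2P_4^4$, so $g_2(K)=2$ but $g_2(K')=1$, hence $\Br(K/\Q)\ne\Br(K'/\Q)$ and the two fibres through a common $B$ are \emph{not} equal (that is precisely how the paper produces fibres with infinite intersection that still differ). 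So with Gassmann pairs your key identity $\Br(K/\Q)=\Br(K'/\Q)$ can fail, and the step you flagged as ``the main obstacle'' is not a fact to be verified but a hypothesis that must be arranged; likewise the claim that composita of Gassmann pairs give infinitely many suitable field pairs inherits the same problem.

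The repair is exactly the paper's choice: take $K,K'$ to be \emph{locally equivalent} (isomorphic ad\`ele rings), not merely arithmetically equivalent. By Komatsu there exist non-isomorphic locally equivalent pairs of degree $2^j$ for every $j>2$, giving infinitely many field pairs, and local equivalence furnishes a bijection of places preserving local degrees over every rational prime, ramified or not, so $g_v(K)=g_v(K')$ for all $v$ and your coset argument goes through verbatim. With that fix, your construction of $B$ (invariants $1/d$ and $-1/d$ at two primes split completely in $K$, hence in $K'$) produces degree-$d$ division algebras $A,A'$ with equal nonempty fibres, and varying the split prime gives infinitely many pairs for each fixed $d$. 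This differs in detail from the paper, which prescribes the local invariants of $A$ directly over $r$ unramified primes each admitting a place of inertial degree one (with $d\mid r$) and then exhibits a preimage $B$; both constructions are legitimate once the fields are genuinely locally equivalent, and your kernel/coset formulation makes transparent exactly which local coincidence is being used.
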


\nid Our next theorem shows that $(\Res_{K/\Q})^{-1}(A)$ and $(\Res_{K'/\Q})^{-1}(A')$ can nearly be equal without being equal.

\begin{thm}\label{AlmostEqual}
There exists number fields $K,K'$ and central simple algebras $A,A'$ over $K,K'$ such that 
\[ \abs{(\Res_{K/\Q})^{-1}(A) \cap (\Res_{K'/\Q})^{-1}(A')} = \infty \] 
but
\[ (\Res_{K/\Q})^{-1}(A) \ne (\Res_{K'\Q})^{-1}(A'). \]
\end{thm}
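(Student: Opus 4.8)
The plan is to translate both fibers into local--invariant data via class field theory and then build the ramification by a Chebotarev argument. Recall that $\Br(\Q)$ sits inside $\bigoplus_v\Q/\Z$ (sum over all places of $\Q$) as the group of finitely supported tuples of invariants summing to $0$, that $\Br(K)$ embeds analogously, and that for a place $w$ of $K$ above $v$ the map $\Res_{K/\Q}$ multiplies the $v$--invariant by the local degree $[K_w:\Q_v]$. I will take $K$ and $K'$ to be two distinct imaginary quadratic fields; then the archimedean place of $\Q$ becomes a complex place of each, where every invariant vanishes, so it plays no role, while at a finite prime $\ell$ unramified in a given field the local degree is $1$ if $\ell$ splits and $2$ if $\ell$ is inert.

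Next I fix the arithmetic data. Since $[KK':\Q]=4$, Chebotarev produces infinitely many primes in each relevant Frobenius class; in particular I choose distinct primes $q_1,q_2$ split in both $K$ and $K'$, a prime $p$ inert in $K$ but split in $K'$, and an infinite set $\Sigma$ of primes inert in both $K$ and $K'$. Let $A$ be the quaternion $K$--algebra ramified exactly at the four places of $K$ above $q_1,q_2$, and $A'$ the quaternion $K'$--algebra ramified exactly at the four places of $K'$ above $q_1,q_2$; both are division algebras.

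Now I identify the fibers. Since all invariants in sight lie in $\tfrac12\Z/\Z$, the formula $x_w=[K_w:\Q_v]\,x_v$ shows: over a prime $v$ split in $K$, the equation $B\otimes_\Q K\cong A$ forces $\mathrm{inv}_v(B)=\tfrac12$ precisely for $v\in\{q_1,q_2\}$; over a prime $v$ inert in $K$ the equation imposes nothing, since $2\cdot\tfrac12=0$; and the complex place imposes nothing. Hence $(\Res_{K/\Q})^{-1}(A)$ equals the set $\mathcal B_K$ of quaternion $\Q$--algebras $B$ with $\mathrm{Ram}(B)\cap(\text{primes split in }K)=\{q_1,q_2\}$, and likewise $(\Res_{K'/\Q})^{-1}(A')=\mathcal B_{K'}$ with $K'$ in place of $K$. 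Each fiber is nonempty, since it contains the quaternion $\Q$--algebra ramified exactly at $\{q_1,q_2\}$, and each is infinite.

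Finally I separate and intersect the two fibers. For any two primes $\ell_1,\ell_2\in\Sigma$ the quaternion $\Q$--algebra ramified exactly at $\{q_1,q_2,\ell_1,\ell_2\}$ lies in $\mathcal B_K\cap\mathcal B_{K'}$, because $\ell_1,\ell_2$ are inert in both fields; as $\Sigma$ is infinite this gives $\abs{\mathcal B_K\cap\mathcal B_{K'}}=\infty$. Conversely, fix $\ell\in\Sigma$ and let $B_1$ be the quaternion $\Q$--algebra ramified exactly at $\{q_1,q_2,p,\ell\}$: since $p$ and $\ell$ are inert in $K$ we have $B_1\in\mathcal B_K$, whereas since $p$ splits in $K'$ and $p\notin\{q_1,q_2\}$ the set $\mathrm{Ram}(B_1)\cap(\text{primes split in }K')$ strictly contains $\{q_1,q_2\}$, so $B_1\notin\mathcal B_{K'}$. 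Thus $(\Res_{K/\Q})^{-1}(A)\ne(\Res_{K'/\Q})^{-1}(A')$, which completes the proof. The only real obstacle is bookkeeping: keeping straight which local conditions are \emph{forced} and which are \emph{free} under $\Res$ (here dictated purely by whether the local degree is $1$ or $2$), and arranging via Chebotarev in $KK'/\Q$ that the split loci of $K$ and $K'$ overlap in infinitely many primes (making the intersection infinite) yet differ at $p$ (making the fibers distinct). The same scheme produces degree--$d$ examples, using $\tfrac1d\Z/\Z$--valued invariants at primes whose local degree shares a common factor with $d$.
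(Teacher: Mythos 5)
Your proof is correct, but it takes a genuinely different route from the paper's. The paper builds on its Theorem~4.1 construction: it takes the degree~$8$ arithmetically equivalent (but not locally equivalent) fields of Perlis, uses the inertial-degree-preserving bijection of places to manufacture $A$ and $A'$ supported only on unramified primes, and then locates the failure of equality precisely at the ramified prime $2$, where the splitting types $(2)_K=Q_1^2Q_2^2Q_3^2Q_4^2$ versus $(2)_{K'}=P_1P_2P_3^2P_4^4$ force $\Inv_2(B)$ to be free for $K$ but zero for $K'$; this is exactly what underwrites the paper's remark that the failure of equality is ``quite mild,'' confined to ramified primes of the extensions. You instead take two distinct imaginary quadratic fields and run Chebotarev in the biquadratic field $KK'$: the fiber over $A$ is characterized as the set of quaternion $\Q$--algebras whose ramification among the primes split in $K$ is exactly $\set{q_1,q_2}$ (since every non-split prime of a quadratic field has local degree $2$, which kills the constraint on $\frac{1}{2}\Z/\Z$--valued invariants), the infinite intersection comes from primes inert in both fields, and the inequality from a prime inert in $K$ but split in $K'$. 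Your local-degree bookkeeping is accurate (the only points left implicit are that ramified primes of $K,K'$ also have local degree $2$, and that each ramification set you use has even cardinality so the algebras exist, both of which are immediate), and your argument is more elementary, requiring no arithmetically equivalent fields at all. What it buys less of is the phenomenon the paper is after: your two fibers differ at infinitely many primes (any prime split in one field and inert in the other), whereas the paper's construction makes the fibers agree everywhere except over the single ramified prime $2$, which is the sharper ``almost equal'' behavior motivating the theorem.
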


\nid The failure in equality in the above theorem is quite mild, only involving the local behavior of the algebras $B/\Q$ at the ramified places of $\Q$ in the extensions $K,K'$. \tss

\nid The algebras $A,A'$ produce arithmetic lattices in semisimple Lie groups via orders and restriction of scalars to $\Q$. Specifically, the semisimple Lie groups are
\[ G_{r,s} = (\SL(d,\R))^r \times (\SL(d,\C))^s \] 
for certain pairs of $r,s$. The simplest example is when $d=2$ and we can take the algebras to be
\[ A = \Mat(2,K), \quad A' = \Mat(2,K'). \]
The numbers $r,s$ correspond in this case to the number of real and complex places of $K,K'$. The lattices can be taken to be $\SL(2,\Cal{O}_K),\SL(2,\Cal{O}_{K'})$, and the associated arithmetic orbifolds are
\[ M = ((\Hy^2)^r \times (\Hy^3)^s)/\PSL(2,\Cal{O}_K), \quad M' = ((\Hy^2)^r \times (\Hy^3)^s)/\PSL(2,\Cal{O}_{K'}). \]
Here, $\Hy^2,\Hy^3$ are real hyperbolic 2 and 3--space. These orbifolds are sometimes called Hilbert--Blumenthal modular varieties. By construction, they have the same commensurability classes of totally geodesic surfaces coming from the field $\Q$. Each commensurability class of surfaces is associated to a $\Q$--quaternion algebra $B$ such that $B \otimes_\Q \R \cong \Mat(2,\R)$, $B \otimes_\Q K \cong \Mat(2,K)$, and $B \otimes_\Q K' \cong \Mat(2,K')$.  \tss 

\nid More generally, for algebras $A,A'$ in Theorem \ref{MainQuat}, we have associated manifolds $M_A,M_{A'}$ given by $X_{r,s}/\Cal{O}$, $X_{r,s}/\Cal{O}'$, where $X_{r,s}$ is the symmetric space associated to $G_{r,s}$ and $\Cal{O},\Cal{O}'$ are orders in $A,A'$. The manifolds have the property that a manifold $N_B$ coming from a central simple $\Q$--algebra $B$ of degree $d$ arises as an arithmetic totally geodesic submanifold of $M_A$ if and only if it arises as an arithmetic, totally geodesic submanifold of $M_{A'}$, up to the commensurability of $N_B$. In particular, these manifolds have a rich class of totally geodesic submanifolds that are unable to determine the commensurability class of the manifold. This construction works for infinitely many distinct pairs $(r,s)$ and produces infinitely many distinct pairs of commensurability classes of manifolds for each pair $(r,s)$. We obtain the following geometric corollary from the above discussion.\tss

\begin{cor}
Let $A,A'$ be central simple algebras over $K,K'$ with 
\[ (\Res_{K/\Q})^{-1}(A) = (\Res_{K'/\Q})^{-1}(A') \]
and $M_A,M_{A'}$, associated arithmetic manifolds for $A,A'$. Let $B$ is a central simple algebra defined over a $\Q$ of the same degree as $A,A'$ and $N_B$ is an associated arithmetic manifold for $B$. Then $N_B$ arises as a totally geodesic submanifold of $M_A$ up to commensurability if and only if $N_B$ arises as a totally geodesic submanifold of $M_{A'}$ up to commensurability.
\end{cor}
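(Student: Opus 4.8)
The plan is to reduce the geometric assertion to the algebraic equality $(\Res_{K/\Q})^{-1}(A)=(\Res_{K'/\Q})^{-1}(A')$ hypothesized in the corollary, using the standard correspondence between arithmetic totally geodesic submanifolds and $\Q$-algebraic subgroups together with the Weil-restriction adjunction. Set $\B{G}_A=\Res_{K/\Q}\SL_1(A)$, so that $M_A=X_{r,s}/\mathcal O$ is the arithmetic locally symmetric space attached to $\B{G}_A$. First I would record the dictionary: for a degree-$d$ central simple $\Q$-algebra $B$, the statement that $N_B$ arises as an arithmetic totally geodesic submanifold of $M_A$ up to commensurability is equivalent to the existence of a nontrivial $\Q$-homomorphism $\iota\colon\SL_1(B)\to\B{G}_A$. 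Indeed, such an $\iota$ induces a totally geodesic embedding of the symmetric space of $\SL_1(B)$ into $X_{r,s}$ (the inclusion of a reductive $\Q$-subgroup induces a totally geodesic embedding of symmetric spaces), and for any order $\mathcal O_B\su B$ the image $\iota(\mathcal O_B^1)$ is a lattice in $\iota(\SL_1(B)(\R))$ commensurable with a subgroup of a conjugate of $\mathcal O^1$, so it cuts out a totally geodesic suborbifold commensurable with $N_B$; replacing $\iota$ by a $\Q$-conjugate changes this suborbifold only within its commensurability class. Conversely, by the standard theory of arithmetic subgroups a finite-volume arithmetic totally geodesic submanifold of $M_A$ commensurable with $N_B$ is cut out by a $\Q$-subgroup isogenous to $\SL_1(B)$, hence produces such an $\iota$.

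Next I would identify the relevant set of homomorphisms. By the adjunction defining restriction of scalars, $\Q$-homomorphisms $\SL_1(B)\to\Res_{K/\Q}\SL_1(A)$ correspond bijectively to $K$-homomorphisms $\SL_1(B)\times_\Q K\to\SL_1(A)$, that is, to $K$-homomorphisms $\SL_1(B\otimes_\Q K)\to\SL_1(A)$. Both groups here are simply connected $K$-forms of $\SL_d$ (as $B$ and $A$ have degree $d$), in particular connected and semisimple of dimension $d^2-1$; the image of a nontrivial such homomorphism is a closed connected subgroup of full dimension, hence all of $\SL_1(A)$, so the homomorphism is an isogeny between simply connected groups and therefore an isomorphism. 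Thus a nontrivial $\Q$-homomorphism exists if and only if $\SL_1(B\otimes_\Q K)\cong\SL_1(A)$ over $K$; since $\SL_1$ of a central simple algebra determines the algebra only up to passage to the opposite (the outer automorphism of type $A_{d-1}$ acting as $A\mapsto A^{\mathrm{op}}$), this holds if and only if $B\otimes_\Q K\cong A$ or $B\otimes_\Q K\cong A^{\mathrm{op}}$, i.e. $B\in(\Res_{K/\Q})^{-1}(A)\cup(\Res_{K/\Q})^{-1}(A^{\mathrm{op}})$.

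Finally I would feed in the hypothesis. We are given $(\Res_{K/\Q})^{-1}(A)=(\Res_{K'/\Q})^{-1}(A')$, and since $(-)\otimes_\Q K$ commutes with the formation of opposite algebras, $B\in(\Res_{K/\Q})^{-1}(A^{\mathrm{op}})\iff B^{\mathrm{op}}\in(\Res_{K/\Q})^{-1}(A)=(\Res_{K'/\Q})^{-1}(A')\iff B\in(\Res_{K'/\Q})^{-1}((A')^{\mathrm{op}})$, so the ``opposite'' fibers coincide as well; taking unions, $B$ lies in $(\Res_{K/\Q})^{-1}(A)\cup(\Res_{K/\Q})^{-1}(A^{\mathrm{op}})$ exactly when it lies in $(\Res_{K'/\Q})^{-1}(A')\cup(\Res_{K'/\Q})^{-1}((A')^{\mathrm{op}})$, which by the previous paragraph is precisely the claimed equivalence. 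The main obstacle I anticipate is the first step: pinning down that ``arises as an arithmetic totally geodesic submanifold up to commensurability'' is captured exactly by the existence of a nontrivial $\Q$-homomorphism of the corresponding $\SL_1$-groups — i.e. that the choice of order $\mathcal O_B$, the $\Q$-conjugacy of $\iota$, and passage to commensurable submanifolds are precisely the ambiguities one must quotient out. The $A$-versus-$A^{\mathrm{op}}$ phenomenon (vacuous when $d=2$, since quaternion algebras are self-opposite, but genuine in general) is a real point, handled above by the symmetry of the fibers of $\Res_{K/\Q}$ under $(-)^{\mathrm{op}}$.
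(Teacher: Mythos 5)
Your proposal is correct, and it reaches the same underlying reduction as the paper --- ``$N_B$ occurs in $M_A$ up to commensurability $\iff$ $B$ lies in the fiber of $\Res_{K/\Q}$ over $A$ (up to opposites)'' --- but by a genuinely different route. The paper gives no formal proof: the corollary is extracted from the Section 5 discussion, which works concretely with algebras and orders: an isomorphism $B \otimes_\Q K \cong A$ realizes $B$ as a $\Q$--subalgebra of $A$, and $\Cal{O}_B^1 = (\Cal{O} \cap B)^1$ is an arithmetic subgroup of $\Cal{O}^1$ cutting out a totally geodesic suborbifold commensurable with $N_B$, with the converse asserted as part of the same discussion. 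You instead phrase everything through $\Q$--homomorphisms $\SL_1(B) \to \Res_{K/\Q}\SL_1(A)$ and the Weil--restriction adjunction, which buys two things: a clean argument that any nontrivial such homomorphism forces $\SL_1(B\otimes_\Q K) \cong \SL_1(A)$ over $K$ (central kernel, dimension count, simple connectedness), and an explicit treatment of the $A$ versus $A^{\mathrm{op}}$ ambiguity, which the paper silently ignores --- vacuous for $d=2$ but a real point for larger $d$, and correctly disposed of by your observation that the fibers of $\Res_{K/\Q}$ are stable under passage to opposite algebras, so the hypothesis identifies the opposite fibers as well. Both you and the paper rest on the same unproved dictionary between arithmetic totally geodesic submanifolds and reductive $\Q$--subgroups (modulo conjugacy, choice of order, and commensurability), so you assume no more than the paper does; you simply isolate that dictionary as the one external input. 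The only caveat the paper flags that you omit is the archimedean one: for $d$ even and $B$ ramified at the real place, $B^1(\R) \cong \SL(d/2,\BB{H})$ rather than $\SL(d,\R)$, so the type of $N_B$ changes; this does not disturb your if-and-only-if, since it is a property of $B$ alone (and in the paper's construction $A,A'$ are unramified at all archimedean places), but it is worth a sentence when matching $N_B$ with the submanifold your homomorphism produces.
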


\nid On the level of algebraic groups, Theorem \ref{MainQuat} can be restated as:

\begin{cor}\label{MainForms}
There exist number fields $K,K'$ and $K,K'$--forms $\B{G}/K,\B{G}'/K'$ of $\SL_d$ with precisely the same sets 
\[ \set{\B{H}/\Q~:~\B{H}(K) \cong \B{G}(K)} \ne \emptyset \]
and
\[ \set{\B{H}/\Q~:~\B{H}(K') \cong \B{G}'(K')}. \]
There are infinitely many pairs $K,K'$ and for each pair and each $d>1$, infinitely many groups $\B{G},\B{G}'$ satisfying the above conditions.
\end{cor}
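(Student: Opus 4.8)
This corollary is a transcription of Theorem~\ref{MainQuat} into the language of algebraic groups, so the plan is to lay out the dictionary and read off the conclusion. To a central simple algebra $A$ of degree $d$ over a field $F$ one attaches the reduced-norm-one group $\B{G}_A := \SL_1(A)$, an inner $F$-form of $\SL_d$; this assignment is a bijection from isomorphism classes of degree-$d$ central simple $F$-algebras onto inner $F$-forms of $\SL_d$ up to \emph{inner} isomorphism, and $\B{G}_A \cong \B{G}_{A'}$ as algebraic groups over $F$ if and only if $A \cong A'$ or $A \cong (A')^{\mathrm{op}}$ --- the diagram automorphism of type $A_{d-1}$ being realized on algebras by $A \mapsto A^{\mathrm{op}}$. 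The only other ingredient is that restriction of scalars of Brauer classes corresponds to base change of the attached groups, $\B{G}_A \times_F K \cong \B{G}_{A \otimes_F K}$, which is immediate from $\SL_1(A \otimes_F K) = \SL_1(A) \times_F K$ (equivalently, the functoriality of $H^1(\,\cdot\,,\PGL_d)$ under $\Res_{K/F}$).

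Given this, the deduction is purely formal. Take $K,K'$ and $A/K$, $A'/K'$ as produced by Theorem~\ref{MainQuat}, so that $S := (\Res_{K/\Q})^{-1}(A) = (\Res_{K'/\Q})^{-1}(A') \ne \emptyset$, and set $\B{G} := \SL_1(A)$, $\B{G}' := \SL_1(A')$. For a degree-$d$ central simple $\Q$-algebra $B$, writing $\B{H} := \SL_1(B)$, the dictionary gives
\[ \B{H} \times_\Q K \cong \B{G} \iff \SL_1(B \otimes_\Q K) \cong \SL_1(A) \iff B \otimes_\Q K \in \{A,\, A^{\mathrm{op}}\}, \]
hence $\set{\B{H}/\Q : \B{H} \times_\Q K \cong \B{G}} = \set{\SL_1(B) : B \in S \cup S^{\mathrm{op}}}$, where $S^{\mathrm{op}} := \set{B^{\mathrm{op}} : B \in S}$, and identically on the primed side with $S' := (\Res_{K'/\Q})^{-1}(A')$. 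Since $S = S'$ we also get $S^{\mathrm{op}} = (S')^{\mathrm{op}}$, so the two sets coincide and are non-empty because $S \ne \emptyset$. The claims of infinitely many pairs $K,K'$, and of infinitely many pairs $\B{G},\B{G}'$ for each fixed $d>1$, follow from the corresponding clauses of Theorem~\ref{MainQuat} once one notes that $A \mapsto \SL_1(A)$ has fibers of size at most two (the pair $\{A, A^{\mathrm{op}}\}$), so infinitely many algebras produce infinitely many groups.

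As written, the set in the corollary uses $\B{H}(K) \cong \B{G}(K)$, which I read as $\B{H} \times_\Q K \cong \B{G}$ over $K$, with $\B{H}$ ranging over inner $\Q$-forms of $\SL_d$; under that reading nothing above needs work beyond the bookkeeping of the $A \leftrightarrow A^{\mathrm{op}}$ correspondence, and there is no real obstacle. If instead one wants the literal reading --- an isomorphism of the abstract groups of $K$-points, ranging over all $\Q$-forms of $\SL_d$ --- then a further input is required and is where any genuine difficulty lies: the rigidity of abstract homomorphisms of semisimple groups over the infinite field $K$, in the style of Borel--Tits (supplemented by superrigidity or the congruence subgroup property for the anisotropic cases, and by a direct treatment of $d=2$, where $\SL_d$ has no outer automorphism and quaternion algebras are self-opposite). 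This upgrades an abstract isomorphism $\B{H}(K) \cong \B{G}(K)$ to an isomorphism $\B{H} \times_\Q K \cong \B{G}$ over $K$ --- the field automorphism of $K$ that Borel--Tits might a priori introduce being harmless since $\B{H}$ is defined over $\Q$ --- and forces $\B{H} \times_\Q K$ to be of inner type $A_{d-1}$, so that, after excluding outer ${}^2A_{d-1}$-forms that become inner over $K$, $\B{H}$ itself is inner and the argument above applies.
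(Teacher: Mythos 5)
Your proposal is correct and takes essentially the same route as the paper: the paper offers no separate proof of this corollary, presenting it as an immediate restatement of Theorem~\ref{MainQuat} via the standard dictionary between degree-$d$ central simple algebras over a field and (inner) forms of $\SL_d$ (it works with Brauer groups in place of Galois cohomology), which is exactly the translation you carry out. Your additional bookkeeping --- the $A \leftrightarrow A^{\mathrm{op}}$ ambiguity in $\SL_1(A)$, and the Borel--Tits discussion needed if $\B{H}(K)\cong\B{G}(K)$ is read as an abstract isomorphism of point groups --- fills in details the paper leaves implicit, and your observation that a field automorphism of $K$ is harmless because $B\otimes_\Q K$ descends to $\Q$ is the right way to dispose of that point.
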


\nid One can state this in terms of Galois cohomology and maps between Galois cohomology sets; below we work with Brauer groups instead of Galois cohomology explicitly though they are one in the same (see \cite[Chapter 14]{Pierce} or \cite[Chapter X]{Serre}). The groups $\B{G},\B{G}'$ in the theorem are called $K$,$K'$--forms for the groups $\B{H}$. In particular, $\B{G},\B{G}'$ are the same $K,K'$--forms for $\Q$--forms of $\SL_d$. In \cite{CMM}, we will investigate at more depth relationships in Galois cohomology associated various constructions over pairs of fields. Finally, this article is far from exhaustive on the types of constructions possible via the methods presented here. We remark in the final section more on generalizations of the constructions in this article.

\paragraph{Acknowledgements}

\nid A debt is owed to Amir Mohammadi for conversations on this material that lead me to address the questions in this article. In addition, I thank Ted Chinburg, Britain Cox, Jordan Ellenberg, Skip Garibaldi, Ben Linowitz, Jeff Meyer, Nicholas Miller, Alan Reid, Matthew Stover, and Henry Wilton for conversations on the material in this article.

\section{Preliminaries}

\nid This section contains some preliminary material required in the sequel.

\subsection{Number fields}

\nid By a number field $K$, we mean a finite extension of $\Q$. We denote the set of places of $K$ by $\Cal{P}_K$. Each place $\omega$ resides over a unique place $q \in \Cal{P}_\Q$ and we write $\omega \mid q$ when $\omega$ is a place over $q$. For $q=\iny$, the places $\omega$ are just the real and complex places and are often referred to as the archimedean places. The associated extension $K_\omega/\Q_q$ of local fields has degree given by $e(K_\omega/\Q_q)f(K_\omega/\Q_q)$ where $e(K_\omega/\Q_q)$ is the ramification degree and $f(K_\omega/\Q_q)$ is called the inertial degree (see \cite[p. 19, Proposition 3]{CF}). There are only finitely many primes $q$ for which $e(K_\omega/\Q_q)>1$ for some $\omega$ (see \cite[p. 22, Corollary 2]{CF}). In addition, it is well known (see \cite[p. 65, Theorem 21]{Marcus}) that
\begin{equation}\label{FieldDegree}
\sum_{\omega \mid q} e(K_\omega/\Q_q)f(K_\omega/\Q_q) = \deg(K/\Q).
\end{equation}
The number of distinct places over a fixed $q$ will be denoted by $g_q(K/\Q)$.

\subsection{Central simple algebras and Brauer groups}

\nid We refer the reader to \cite{Pierce} for a general introduction to central simple $K$--algebras. The Morita equivalence classes of central simple algebras over a number field $K$ with tensor product form a group called the Brauer group of $K$. We denote the Brauer group of $K$ by $\Br(K)$ (see \cite[12.5]{Pierce}). Given any extension of fields $L/K$, we obtain a homomorphism
\[ \Res_{L/K}\colon \Br(K) \lra \Br(L) \]
via
\[ \Res_{L/K}([B]) = [B \otimes_K L]. \]
For $[A] \in \Br(L)$, we denote the fiber of $\Res_{L/K}$ over $[A]$ by $(\Res_{L/K})^{-1}([A])$. In fact, we will abuse notation and drop the notation for the Morita equivalence class since we work (almost always) with division algebras and there is a unique division algebra in each Morita equivalence class (see \cite[Proposition b, p. 228]{Pierce}). \tss

\nid For each place $\omega \in \Cal{P}_K$, we have an associated algebra $A_\omega = A \otimes_K K_\omega$. Via local class field theory, we have an isomorphism (see \cite[p. 193, Proposition 6]{Serre})
\[ \Br(K_\omega) \lra \Q/\Z \]
when $\omega$ is a finite place. For a real or complex place, classically via Wedderburn, we have
\[ \Br(\R) = \Z/2\Z, \quad \Br(\C) = 1. \]
From these isomorphisms, for $B \in \Br(K)$ and each place $\omega \in \Cal{P}_K$, we obtain
\[ \Inv_\omega(B) \bdef \Inv(B_\omega) \in \Q/\Z,\frac{1}{2}\Z/\Z,\text{ or } \Z/\Z \]
called the local invariant of $B$ at $\omega$. The total package
\[ \Inv(B) = \set{\Inv_\omega(B)~:~\omega \in \Cal{P}_K} \]
is called the invariant of $B$. By the Albert--Hasse--Brauer--Noether Theorem (see \cite[Section 18.4]{Pierce}), $B$ is determined as a $K$--algebra by $\Inv(B)$. Moreover, any set 
\[  \set{\alpha_\omega \in \Br(K_\omega)~:~\omega \in \Cal{P}_K} \su \prod_{\omega \in \Cal{P}_K} \Br(K_\omega) \] 
can be realized as the invariants of an algebra provided two conditions are met:
\begin{itemize}
\item[(a)]
$\alpha_\omega = 0$ for all but finitely many places $\omega \in \Cal{P}_K$ (see \cite[p. 358, Proposition]{Pierce}).
\item[(b)]
\[ \sum_{\omega \in \Cal{P}_K} \alpha_\omega = 0 \mod \Z. \]
For this condition, see \cite[p. 363, Proposition b]{Pierce}.
\end{itemize}
If $L_\nu/K_\omega$ is a finite extension and $B_\omega \in \Br(K_\omega)$, then (see \cite[p. 193, Proposition 7]{Serre})
\begin{equation}\label{LocalInvariants}
\Inv_\nu(B_\omega \otimes_{K_\omega} L_\nu) = [L_\nu:K_\omega]\Inv_\omega(B_\omega).
\end{equation}
We say $A/K$ is unramified at a place $\omega$ when $A \otimes_K K_\omega \cong \Mat(d,K_\omega)$ and ramified otherwise. In particular, when $\Inv_\omega(A) \ne 0$, $A$ is ramified at $\omega$. We denote the set of places where $A$ is ramified by $\Ram(A)$.

\section{Arithmetically and locally equivalent fields}

\nid We say two number fields $K,K'$ are arithmetically equivalent if $\zeta_K(s) = \zeta_{K'}(s)$. We say $K,K'$ are locally equivalent if $\B{A}_K \cong \B{A}_{K'}$, where
\[ \B{A}_K = \prod_{\omega \in \Cal{P}_K} K_\omega \]
is the ring of $K$--ad\`eles. By work of Iwasawa (see \cite{Komatsu} or \cite{Perlis}), when $K,K'$ are locally equivalent, $K,K'$ are arithmetically equivalent. When $\B{A}_K \cong \B{A}_{K'}$, there is a bijective map (see \cite[Lemma 3]{Komatsu})
\[ \Phi\colon \Cal{P}_K \lra \Cal{P}_{K'} \]
such that for all $\omega \in \Cal{P}_K$, we have $K_\omega \cong K_{\Phi(\omega)}$. Moreover, we have
\[ f(K_\omega/\Q_q) = f(K'_{\Phi(\omega)}/\Q_q), \quad e(K_\omega/\Q_q) = e(K'_{\Phi(\omega)}/\Q_q), \quad [K_\omega:F_\nu] = [K'_{\Phi(\omega)}:F_\nu]. \]
For arithmetically equivalent fields $K,K'$, we have a bijective map (see \cite[Theorem 1]{Perlis})
\[ \Phi\colon \Cal{P}_K \lra \Cal{P}_{K'} \]
such that
\[ f(K_\omega/\Q_q) = f(K_{\Phi(\omega)}'/\Q_q) \]
for any place $\omega \in \Cal{P}_K$. For any unramified prime $q$, we see that
\[ [K_\omega:\Q_q]  = f(K_\omega/\Q_q) = f(K_{\Phi(\omega)}'/\Q_q)  = [K_{\Phi(\omega)}':\Q_q] \]
since $K,K'$ have the same set of unramified primes and at such places we have by definition
\[ e(K_\omega/\Q_q) = e(K_{\Phi(\omega)}'/\Q_q) = 1. \]

\section{Fibers of the restriction map}

\nid Given a central simple $K$--algebra $A$ and number fields $K/F$, if $B \in (\Res_{K/F})^{-1}(A)$, then by definition
\[ A = B \otimes_F K. \]
At any place $\omega \mid \nu$, we have the local equation (\ref{LocalInvariants}) given by
\[ \Inv_\omega(A) = \Inv_\omega(B \otimes_F K)= [K_\omega:F_\nu]\Inv_\nu(B). \]
Notice that for each place $\nu \in \Cal{P}_F$, we have an equation for each $\omega$ over $\nu$. Solving for $\Inv_\nu(B)$, we see that
\[ \frac{\Inv_{\omega_1}(A)}{[K_{\omega_1}:F_\nu]} = \frac{\Inv_{\omega_2}(A)}{[K_{\omega_2}:F_\nu]} = \dots = \frac{\Inv_{\omega_{g_\nu(K/F}}(A)}{[K_{\omega_{g_\nu(K/F)}}:F_\nu]} = \Inv_\nu(B)  \]
holds for $\omega_1,\dots,\omega_{g_\nu(K/F)}$ over $\nu$. In particular, the local invariants of $A$ at all of the places over $\nu$ satisfy
\[ \frac{[K_{\omega_j}:F_\nu]}{[K_{\omega_1}:F_\nu]} \Inv_{\omega_1}(A)= \Inv_{\omega_j}(A), \quad j \in \set{1,\dots,g_\nu(K/F)}. \]
Typically, these equalities will not be satisfied for an algebra $A$.\tss

\nid One of the main results of \cite{McR} was a proof that for an arithmetic hyperbolic 3--manifold with a totally geodesic surface, the invariant quaternion algebra of the 3--manifold is determined (among all invariant quaternion algebras of arithmetic hyperbolic 3--manifolds) by the quaternion algebras over the maximal totally real subfield of the invariant trace field. Specifically, each 3--manifold $M_1,M_2$ has an associated number field $K_1,K_2$ with precisely one complex place called the invariant trace field. Since the manifolds contain a totally geodesic surface, there is a common totally real subfield $F = K_1 \cap K_2$ with $[K_j:F]=2$. The 3--manifolds also each have an associated quaternion algebra $A_1,A_1$ over $K_1,K_2$ called the invariant quaternion algebra. The condition that the manifolds have the same totally geodesic surfaces, up to commensurability, implies that
\begin{equation}\label{3man}
(\Res_{K_1/F})^{-1}(A_1) = (\Res_{K_2/F})^{-1}(A_2)
\end{equation}
The classification of totally geodesic surfaces in this setting also yields that
\begin{equation}\label{3man2}
\abs{(\Res_{K_1/F})^{-1}(A_1)} = \iny.
\end{equation}
Combining (\ref{3man}), (\ref{3man2}) with basic class field theory, we obtain $K_1 \cong K_2$ and thus $A_1 \cong A_2$. For arithmetic manifolds, we get immediately that $M_1,M_2$ are commensurable. \tss 

\nid The following results shows that such rigidity behavior is not always the case.

\begin{thm}\label{AlgebraFlexibility}
There exists infinitely pairs of fields $K_j,K_j'$ over $\Q$ such that the following holds.
\begin{itemize}
\item[(a)]
There exists infinitely many pairs of central simple algebras $A_{i,j}/K_j,A'_{i,j}/K_j'$ such that
\[ (\Res_{K_j/\Q})^{-1}(A_{i,j}) = (\Res_{K_j'/\Q})^{-1}(A'_{i,j}) \ne \emptyset. \]
\item[(b)]
For each pair of fields $K_j,K_j'$, we can take the algebras to have degree $d$ for any $d>1$. In particular, for each pair of fields, there exists infinitely many pairs of algebras for every degree.
\item[(c)]
If $d=2$, the algebras $A_{i,j},A_{i,j}'$ can be constructed so that
\[ \abs{(\Res_{K_j/\Q})^{-1}(A_{i,j})} = \iny. \]
\end{itemize}
\end{thm}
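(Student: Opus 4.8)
\section*{Proof proposal for Theorem \ref{AlgebraFlexibility}}

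The plan is to exploit the fact that a non-empty fiber of $\Res_{K/\Q}$ is a coset of $\ker(\Res_{K/\Q})$, and that this kernel is a function only of the local degrees of $K/\Q$ — precisely the data preserved under a local equivalence. First I would record the key observation: if $A$ lies in the image of $\Res_{K/\Q}$ and $B \in (\Res_{K/\Q})^{-1}(A)$, then since $\Res_{K/\Q}$ is a homomorphism of Brauer groups,
\[ (\Res_{K/\Q})^{-1}(A) = B + \ker(\Res_{K/\Q}). \]
Hence if $K,K'$ are two fields, $B \in \Br(\Q)$, and we set $A = B \otimes_\Q K$ and $A' = B \otimes_\Q K'$, then both fibers are non-empty and they coincide as soon as $\ker(\Res_{K/\Q}) = \ker(\Res_{K'/\Q})$. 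So the whole problem reduces to producing non-isomorphic fields with equal restriction-kernels, plus bookkeeping on the algebras. Using the local identity (\ref{LocalInvariants}), $C \in \ker(\Res_{K/\Q})$ if and only if for every rational prime $q$ the denominator of $\Inv_q(C)$ divides $m_q(K) \bdef \gcd_{\omega \mid q}[K_\omega : \Q_q]$, together with the archimedean condition that $\Inv_\iny(C) = 0$ whenever $K$ has a real place. Thus $\ker(\Res_{K/\Q})$ depends only on the local degrees $\set{[K_\omega:\Q_q]}$. If $K,K'$ are locally equivalent, the bijection $\Phi \co \Cal{P}_K \to \Cal{P}_{K'}$ of Section 3 satisfies $[K_\omega:\Q_q] = [K'_{\Phi(\omega)}:\Q_q]$ for all $\omega$, so $m_q(K) = m_q(K')$ for every $q$ and $\ker(\Res_{K/\Q}) = \ker(\Res_{K'/\Q})$.

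Next I would produce the fields and algebras. For the fields: take $K,K'$ locally equivalent and non-isomorphic; infinitely many such pairs exist (see \cite{Komatsu}, \cite{Perlis}), and one can also manufacture new pairs from a given one by composing with Galois extensions $L_0/\Q$ linearly disjoint from the Galois closure of $KK'$. Given a target degree $d>1$, choose two rational primes $p,p'$ that split completely in the Galois closure of $KK'$ — there are infinitely many by Chebotarev, and since complete splitting is visible to the local data, each such prime has a degree-one place in both $K$ and $K'$. Let $B$ be the division algebra over $\Q$ with $\Inv_p(B) = 1/d$, $\Inv_{p'}(B) = -1/d$, and all other local invariants $0$; this satisfies conditions (a) and (b) of the Albert--Hasse--Brauer--Noether realization and has index $d$. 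Put $A = B \otimes_\Q K$ and $A' = B \otimes_\Q K'$: because $p,p'$ have degree-one places in each field, $A$ and $A'$ are again division algebras of degree exactly $d$. Varying $(p,p')$ gives infinitely many pairs, and they are pairwise non-isomorphic: if the classes for $(p,p')$ and $(q,q')$ lay in the same coset of $\ker(\Res_{K/\Q})$, their difference would be a kernel class forcing $d \mid m_p(K) = 1$, a contradiction. This yields (a) and (b).

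For part (c), with $d = 2$: since the fiber is a coset of $\ker(\Res_{K/\Q})$, it is infinite exactly when $\ker(\Res_{K/\Q})$ is infinite, and by the description above this happens iff there are infinitely many primes $q$ with $2 \mid m_q(K)$ (pair such primes two at a time, placing invariant $1/2$ at each, to build infinitely many distinct kernel classes). For $q$ unramified in the Galois closure this is a Chebotarev condition: $2 \mid m_q(K)$ precisely when the Frobenius at $q$ acts on the coset space $G/H$ (for $K = L^H$, $L/\Q$ Galois with group $G$) as a permutation all of whose cycles have even length — a condition unchanged if $H$ is replaced by a Gassmann-equivalent subgroup. It therefore suffices to choose the pair so that $K$ and $K'$ share a common quadratic subfield $F$ — equivalently $H$ is contained in an index-two normal subgroup of $G$ — for then every prime inert in $F$ has all its local degrees in $K$ even; suitable such pairs exist (classical octic examples), and the compositum construction preserves the common quadratic subfield.

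The step I expect to be the main obstacle is this last one: pinning down (or citing cleanly) a locally equivalent, non-isomorphic pair of fields that additionally carries the required even-cycle structure over infinitely many primes. Everything else is an essentially formal consequence of the coset description of the fiber together with the local-degree bookkeeping.
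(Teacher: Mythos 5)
Your argument for parts (a) and (b) is correct, and it reorganizes the paper's computation in a cleaner way: the paper never phrases things in terms of $\ker(\Res_{K/\Q})$, but instead prescribes the local invariants of $A$ directly (invariant $1/d$ at a degree-one place over each of $r$ unramified primes with $d\mid r$, and $[K_\omega:\Q_q]/d$ at the other places over those primes), transports them to $K'$ via the place bijection $\Phi$, and then proves both inclusions of the fibers by the same local computation with (\ref{LocalInvariants}) that underlies your kernel equality. Since the paper's $A$ is exactly $B\otimes_\Q K$ for the $B$ it builds to show non-emptiness, your family ($\Inv_p(B)=1/d$, $\Inv_{p'}(B)=-1/d$ at completely split primes, $A=B\otimes_\Q K$, $A'=B\otimes_\Q K'$) is essentially the same construction with lighter bookkeeping (no $d\mid r$ condition), and your coset formulation makes the equality of fibers formal once $\ker(\Res_{K/\Q})=\ker(\Res_{K'/\Q})$ is known; your verification that the kernel depends only on the multiset of local degrees, and your non-isomorphy argument via $m_p(K)=1$ at a split prime, are both fine. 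What your approach buys is that the fiber equality is seen at once for \emph{every} $B$ in the image, i.e.\ it gives Corollary \ref{LocalMain} for free; what the paper's hands-on version buys is that the same local manipulations are reused verbatim in the proof of Theorem \ref{AlmostEqual}, where the fields are only arithmetically equivalent and the kernels genuinely differ at the ramified prime $2$.

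The genuine gap is the one you flagged, in part (c): you must exhibit a locally equivalent, non-isomorphic pair $K,K'$ for which infinitely many primes $q$ have \emph{all} local degrees $[K_\omega:\Q_q]$ even, and your proposal leaves this unverified. The paper closes it by using Komatsu's explicit pairs of degree $2^j$, $j\geq 3$ \cite{Komatsu2}: their common Galois closure is a $2$-group, so at an unramified prime every local degree is a power of $2$, and all of them are even exactly when the Frobenius class avoids every conjugate of the subgroup $H$ fixing $K$; since $H$ is proper, such classes exist and Chebotarev gives an infinite (positive-density) set $\Cal{P}_{2,\Q}$, which is the paper's assertion. Your ``common quadratic subfield'' criterion is also a valid route and is in fact satisfied by Komatsu's pure $2^j$-th root pairs (both contain $\Q(\sqrt{a})$), but be careful with ``classical octic examples'': the Perlis octic pair invoked later in the paper (proof of Theorem \ref{AlmostEqual}) is arithmetically equivalent but \emph{not} locally equivalent, so it cannot serve for part (a). So the gap is fillable, but only by pinning the construction to a specific family such as Komatsu's; likewise, your compositum trick for producing infinitely many pairs of fields would need an argument (the paper simply cites the infinite family in \cite{Komatsu2}, one pair for each degree $2^j$).
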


\begin{proof}
Let $K_j,K_j'$ be distinct, locally equivalent number fields. We take here the explicit examples given by \cite[Theorem, p.1]{Komatsu2} which have $\deg(K_j) = \deg(K_j') = 2^j$ for all $j>2$. For simplicity, we set $K_j = K$ and $K_j' = K'$. Let $q_1,\dots,q_r$ be a finite number of primes. We will assume over each prime $q_j$, there is a place $\omega_j$ with inertial degree $1$. Infinitely many primes $q$ have this property by the Cebotarev Density Theorem. We further insist that the primes $q_j$ are also unramified in $K$ (or equivalently $K'$). Since there are only finitely many ramified primes, the set of $r$--tuples of unramified primes $(q_1,\dots,q_r)$ such that for each $q_j$ there is a place $\omega_j$ of $K$ over $q_j$ with inertial degree $1$ is infinite. We get an infinite set for each $r$ but must insist that $d \mid r$. \tss

\nid Let $\set{q_1,\dots,q_r}$ be an $r$--tuple satisfying the above conditions. For each $q_j$, we pick a place $\omega_j \in \Cal{P}_K$ over $q_j$ with inertial degree 1. We define a central simple algebra $A$ over $K$ of degree $d$ by local invariants as follows:
\begin{itemize}
\item[(a)]
For each $j$, over the inert place $\omega_j \mid q_j$, we define the invariant to be
\[ \Inv_{\omega_j}(A) = \frac{1}{d}. \]
\item[(b)]
For each $\omega \mid q_j$, we define the invariant to be
\[ \Inv_\omega(A)=\frac{[K_\omega:\Q_{q_j}]}{[K_{\omega_j}:\Q_{q_j}]} \Inv_{\omega_j}(A)=[K_\omega:\Q_{q_j}]\Inv_{\omega_j}(A). \]
\item[(c)]
Finally, for any place $\omega$ not over one of the $q_j$, we define the invariant to be
\[ \Inv_\omega(A) = 0. \]
\end{itemize}
Via our bijection $\Phi\colon \Cal{P}_K \to \Cal{P}_{K'}$, we define $A'/K'$ via the local data
\[ \Inv_{\omega}(A') = \Inv_{\Phi^{-1}(\omega)}(A) \]
for any place $\omega \in \Cal{P}_{K'}$. By construction of $A,A'$, we have
\[ \abs{\Ram(A)},\abs{\Ram(A')} < \infty. \]
It remains to check that
\[ \sum_\omega \Inv_\omega A = \sum_\omega \Inv_{\Phi(\omega)}(A') = 0 \mod \Z. \]
If this equation holds, then we know that there exist algebras $A,A'$ with the above local invariants. Now, to prove the above sum is zero, we have
\begin{align*}
\sum_{\omega} \Inv_{\omega}(A) &= \sum_{j=1}^r \brac{\sum_{\omega \mid q_j}\pr{ \frac{[K_\omega:\Q_{q_j}]}{d}}} \\
&= \frac{1}{d}\pr{ \sum_{j=1}^r \brac{\sum_{\omega \mid q_j} [K_\omega:\Q_{q_j}]}} \\
&= \frac{1}{d}\brac{ \sum_{j=1}^r m_{q_j,K} }
\end{align*}
where
\[ m_{q_j,K} = \sum_{\omega \mid q_j} [K_\omega:\Q_{q_j}]. \]
However, by (\ref{FieldDegree}), we see that $m_{q_j,K} = [K:\Q]$. In particular,
\[  \frac{1}{d}\brac{ \sum_{j=1}^r m_{q_j,K} } =  \frac{1}{d}\brac{ \sum_{j=1}^r [K:\Q] } = \frac{r[K:\Q]}{d}. \]
By selection, $d \mid r$ and hence
\[ \sum_{\omega \in \Cal{P}_K} \Inv_\omega(A) = \frac{r[K:\Q]}{d} =  0 \mod \Z, \]
as needed. Thus, we know that there exist algebras $A,A'$ that satisfy (a), (b), and (c). \tss 

\nid Next, we argue that
\[ (\Res_{K/\Q})^{-1}(A) = (\Res_{K'/\Q})^{-1}(A'). \]
Given an algebra $B \in (\Res_{K/\Q})^{-1}(A)$, we know that by (\ref{LocalInvariants})
\[ \Inv_\omega(A) = [K_\omega:\Q_q]\Inv_q(B) \]
for every prime $q$ and every place $\omega$ over $q$. Via our bijection $\Phi\colon \Cal{P}_K \to \Cal{P}_{K'}$ we have
\[ [K_\omega:\Q_q] = [K_{\Phi(\omega)}:\Q_q] \]
and
\[ \Inv_{\Phi(\omega)}(A') = \Inv_\omega(A). \]
In tandem, we see that
\begin{align*} 
\Inv_{\Phi(\omega)}(A') &= \Inv_\omega(A) \\
&= [K_\omega:\Q_q]\Inv_q(B) \\
&= [K'_{\Phi(\omega)}:\Q_q]\Inv_q(B).
\end{align*}
The algebra $B \otimes_\Q K'$ has local invariants given by (\ref{LocalInvariants}) and thus
\[ \Inv_{\Phi(\omega)}(B \otimes_\Q K') = [K_{\Phi(\omega)}':\Q_q] \Inv_q(B) = \Inv_{\Phi(\omega)}(A'). \]
Therefore, by the Albert--Hasse--Brauer--Noether Theorem, $A' \cong B \otimes_\Q K'$. In particular, $B \in (\Res_{K'/\Q})^{-1}(A')$ and so
\[ (\Res_{K/\Q})^{-1}(A) \su (\Res_{K'/\Q})^{-1}(A'). \]
The reverse inclusion
\[ (\Res_{K'/\Q})^{-1}(A') \su (\Res_{K/\Q})^{-1}(A) \]
follows from an identical argument.\tss

\nid Finally, to see that $(\Res_{K/\Q})^{-1}(A)$ is non-empty, simply note that by (a) and (b) and the fact that the inertial degree of $K$ at $\omega_j$ is 1, we can define
\[ \Inv_{q_j}(B) = \frac{1}{d}. \]
This is consistent with the local equations
\[ \Inv_\omega(A) = [K_\omega:\Q_{q_j}]\Inv_{q_j}(B) \]
by (b). At any other prime $q$, we could declare $\Inv_q(B)=0$ and thus complete the local data for an algebra $B$ over $\Q$. For this claim, we are again using $d \mid r$ to obtain
\[ \sum_q \Inv_q(B) = 0 \mod \Z. \]
In total, we see that this local data satisfies the necessary conditions to be the local invariants of an algebra $B$ over $\Q$. \tss

\nid However, it can sometimes be the case that we can choose non-zero invariants for $B$ at other place $q$ not on the list $\set{q_1,\dots,q_r}$. Specifically, when $q$ is a place such that for every place $\omega$ over $q$ we have $d \mid [K_\omega:\Q_q]$, then we can certainly set $\Inv_q(B)=1/d$ without violating the local equations
\[ 0 = \Inv_\omega(A) = [K_\omega:\Q_q]\Inv_q(B) = \frac{[K_\omega:\Q_q]}{d} = 0 \mod \Z. \]
In the case $d=2$, we assert that the set $(\Res_{K/\Q})^{-1}(A)$ is infinite. To that end, recall that $\deg(K) = \deg(K') = 2^j$ for $j \geq 3$ and let $\Cal{P}_{2,\Q}$ denote the set of primes $q$ such that for every $\omega \mid q$, we have $2 \mid f(K_\omega/\Q_q)$. By the Cebotarev Density Theorem, the set $\Cal{P}_{2,\Q}$ is infinite. Let $B$ be a $\Q$--algebra such that over the primes $q_1,\dots,q_r$, we have
\[ \Inv_{q_j}(B) = \frac{1}{d}. \] 
Next, set
\[ \Ram(B) = \set{q_1,\dots,q_r} \cup \set{q_1'\dots,q_{r'}'} \]
where $q_j' \in \Cal{P}_{2,\Q}$, $d \mid r'$, and
\[ \Inv_{q_j'}(B) = \frac{1}{d}. \]
Then these invariants satisfy the necessary conditions to be the local invariants for an algebra $B$ over $\Q$. To see that $B \in (\Res_{K/\Q})^{-1}(A)$, we split our consideration into three places. First, for any place $q \in\set{q_1,\dots,q_r}$, we saw from above that $B$ is consistent with the local equation (\ref{LocalInvariants}) for each $\omega \mid q$. Second, for any place $q \notin \Ram(B)$, the algebra $B$ is trivially consistent with the local equation (\ref{LocalInvariants}) since
\[ \Inv_\omega(A) = \Inv_q(B) = 0 \]
for each $\omega \mid q$. Finally, for $q \in \set{q'_1,\dots,q'_{r'}}$, the algebra $B$ satisfies the local equation (\ref{LocalInvariants}) since for each $\omega \mid q$, we have
\[ \Inv_q(B) = \frac{1}{d}, \quad \Inv_\omega(A) = 0 \]
and
\[ d \mid [K_\omega:\Q_q]. \]
Consequently,
\[ \Inv_\omega(B \otimes_\Q K) = \frac{[K_\omega:\Q_q]}{d} = 0 \mod \Z \]
for each $\omega \mid q$. Varying over the finite subsets of $\Cal{P}_{2,\Q}$, we produce infinitely many distinct algebras $B$ in $(\Res_{K/\Q})^{-1}(A)$.\tss

\nid In total, there exists infinitely many choices for the starting pair of fields $K,K'$. For each such pair and each integer $d>1$, there are infinitely many pairs of algebras $A,A'$ of degree $d$ satisfying the conclusions of the theorem. These algebras are given by varying the set of primes $\set{q_1,\dots,q_r}$ where each $q_j$ has a place over it with inertial degree $1$ and each prime $q_j$ is unramified. Finally, we can select $r$ to be any integer with $d \mid r$.
\end{proof}

\nid Theorem \ref{AlgebraFlexibility} visibly implies Theorem \ref{MainQuat}. Also notice that the above argument proves the following.

\begin{cor}\label{LocalMain}
Let $K/K'$ be locally equivalent fields. Then for each $d>1$, there exists central simple division algebras $A,A'$ over $K,K'$ of degree $d$ such that for every subfield $F \su K \cap K'$, we have
\[ (\Res_{K/F})^{-1}(A) = (\Res_{K'/F})^{-1}(A') \ne \emptyset. \]
\end{cor}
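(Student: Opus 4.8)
The plan is to transcribe the proof of Theorem~\ref{AlgebraFlexibility}, reading $F$ for $\Q$ throughout. Fix a finite set of rational primes $q_1,\dots,q_r$, unramified in $K$ (equivalently in $K'$), with $d\mid r$, each carrying a place $\omega_j\in\Cal{P}_K$ of inertial degree $1$; the Chebotarev density theorem supplies infinitely many such sets, exactly as in that proof. Define $A/K$ by $\Inv_\omega(A)=[K_\omega:\Q_{q_j}]/d$ for $\omega\mid q_j$ and $\Inv_\omega(A)=0$ otherwise, and define $A'/K'$ by $\Inv_{\Phi(\omega)}(A')=\Inv_\omega(A)$, where $\Phi\co\Cal{P}_K\to\Cal{P}_{K'}$ is the place bijection furnished by local equivalence. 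These data are admissible by the computation already carried out: $\sum_{\omega\mid q_j}[K_\omega:\Q_{q_j}]=[K:\Q]$ by~(\ref{FieldDegree}), so the total invariant of $A$ (and of $A'$) is $r[K:\Q]/d\equiv 0\pmod{\Z}$ because $d\mid r$; and since $\omega_j$ is unramified of inertial degree $1$ we have $[K_{\omega_j}:\Q_{q_j}]=1$, so $\Inv_{\omega_j}(A)=1/d$ and $A,A'$ are division algebras of degree $d$.

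Now fix $F\su K\cap K'$ and take $B\in(\Res_{K/F})^{-1}(A)$. For $\omega\mid\nu$ with $\nu\in\Cal{P}_F$, equation~(\ref{LocalInvariants}) reads $\Inv_\omega(A)=[K_\omega:F_\nu]\,\Inv_\nu(B)$, and I want to conclude $\Inv_{\Phi(\omega)}(A')=[K'_{\Phi(\omega)}:F_\nu]\,\Inv_\nu(B)$, so that $B\in(\Res_{K'/F})^{-1}(A')$; the reverse inclusion is identical. Since $\Inv_{\Phi(\omega)}(A')=\Inv_\omega(A)$ by construction, this comes down to two compatibilities of $\Phi$ with $F$: (i) $\omega$ and $\Phi(\omega)$ restrict to the \emph{same} place $\nu\in\Cal{P}_F$; and (ii) $[K_\omega:F_\nu]=[K'_{\Phi(\omega)}:F_\nu]$. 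Granting these, the whole invariant bookkeeping transports through $\Phi$ verbatim. Non-emptiness of $(\Res_{K/F})^{-1}(A)$ is handled as in Theorem~\ref{AlgebraFlexibility}: declaring $\Inv_\nu(B)=[F_\nu:\Q_{q_j}]/d$ for $\nu\mid q_j$ and $\Inv_\nu(B)=0$ otherwise is consistent with~(\ref{LocalInvariants}) via the exact tower relation $[K_\omega:F_\nu][F_\nu:\Q_{q_j}]=[K_\omega:\Q_{q_j}]$, and the global sum equals $\sum_j\sum_{\nu\mid q_j}[F_\nu:\Q_{q_j}]/d=r[F:\Q]/d\equiv0\pmod{\Z}$, once more because $d\mid r$.

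I expect the only real obstacle to be establishing (i)--(ii) for every common subfield $F$ with a single bijection $\Phi$. It is enough to treat $F=K\cap K'$, since the places of any smaller subfield, and their local degrees, are read off from those of $K\cap K'$; and (i)--(ii) for $F=K\cap K'$ is precisely the assertion that the local isomorphisms $K_\omega\cong K'_{\Phi(\omega)}$ witnessing the local equivalence of $K,K'$ over $\Q$ may be chosen $F$-linear, i.e.\ that $K$ and $K'$ remain locally equivalent after moving the base from $\Q$ up to $K\cap K'$. In Galois-theoretic terms, with $L$ the Galois closure of $K/\Q$, $G=\Gal(L/\Q)$, $H=\Gal(L/K)$, $H'=\Gal(L/K')$, and $\widetilde H=\Gal(L/F)=\langle H,H'\rangle$, a place $\nu\mid q$ of $F$ determines a decomposition subgroup $D_\nu\le\widetilde H$, and the places of $K$ (resp.\ $K'$) over $\nu$ together with their local degrees over $F_\nu$ are encoded by the $D_\nu$-set $\widetilde H/H$ (resp.\ $\widetilde H/H'$); one must check these two $D_\nu$-sets carry matching orbit data, which I would do by tracking decomposition and inertia groups through the group-theoretic (Gassmann--Iwasawa) description of local equivalence. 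This is the step where the hypothesis of \emph{local} (not merely arithmetic) equivalence of $K$ and $K'$ over $\Q$ is used, at the finitely many ramified rational primes. Once this is in hand the rest of the corollary is a line-by-line copy of the proof of Theorem~\ref{AlgebraFlexibility}.
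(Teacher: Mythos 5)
Your plan follows the paper's own route --- the paper gives no separate argument for this corollary beyond asserting that the proof of Theorem \ref{AlgebraFlexibility} applies verbatim --- and you have correctly isolated exactly where that transcription needs justification: your compatibilities (i) and (ii) of the place bijection $\Phi$ with an intermediate field $F$. The trouble is that this ``only real obstacle'' is not a verification that can be carried out: local equivalence over $\Q$ gives no control over a common subfield $F$ strictly larger than $\Q$, and (i)--(ii) genuinely fail for the fields in play. In the Komatsu degree-$2^j$ examples the paper uses, $K$ and $K'$ are \emph{distinct quadratic} extensions of $F_0 = K\cap K'$ (e.g.\ $K=\Q(m^{1/8})$ and $K'=\Q((16m)^{1/8})$ both contain $m^{1/4}$ with index $2$). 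By Chebotarev applied to the biquadratic compositum $KK'/F_0$, infinitely many places $\nu$ of $F_0$ split in $K$ and are inert in $K'$; over such a $\nu$ the local degrees over $F_{0,\nu}$ are $\set{1,1}$ for $K$ and $\set{2}$ for $K'$, so no bijection of places can satisfy (i)--(ii), and the ``local equivalence over $K\cap K'$'' you hope to extract from the Gassmann--Iwasawa description is simply false (for quadratic extensions it would force $K=K'$ over $F_0$). Your remark that local rather than merely arithmetic equivalence is needed only at the ramified rational primes also misdiagnoses the issue: the failure just described occurs at unramified primes.

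Moreover the gap cannot be closed by a cleverer choice of $A,A'$: at $F=F_0$ the asserted equality of nonempty fibers is itself impossible in these examples. Suppose $B \in (\Res_{K/F_0})^{-1}(A) = (\Res_{K'/F_0})^{-1}(A')$. Choose two places $\nu_1,\nu_2$ of $F_0$ as above (split in $K$, inert in $K'$, avoiding the $q_j$ and $\Ram(B)$), and let $B_1$ have the same local invariants as $B$ except $\Inv_{\nu_i}(B_1)=\Inv_{\nu_i}(B)+\tfrac12$ for $i=1,2$. The sum of invariants is unchanged, and $B_1\otimes_{F_0}K'\cong A'$ because the twist is killed by the local degree $2$ at the inert places, while $B_1\otimes_{F_0}K\not\cong A$ because at a degree-one place of $K$ over $\nu_1$ the invariant moves by $\tfrac12$; so $B_1$ lies in one fiber and not the other, contradicting equality. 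Thus the step you defer is a genuine obstruction, not routine bookkeeping, and the statement can only be rescued by restricting the allowed subfields $F$ (essentially to those over which $K,K'$ remain locally equivalent, which for $F=\Q$ just recovers Theorem \ref{AlgebraFlexibility}). To be fair, the paper's own justification and the unexplained identity $[K_\omega:F_\nu]=[K'_{\Phi(\omega)}:F_\nu]$ in Section 3 rest on the same unestablished compatibility, so you have put your finger on a real problem --- but your proposal does not, and as outlined cannot, close it.
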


\nid We now prove Theorem \ref{AlmostEqual}.

\begin{proof}[Proof of Theorem \ref{AlmostEqual}]
Via \cite[p. 351]{Perlis}, there exist degree 8 non-isomorphic arithmetically equivalent extensions $K,K'$ whose Galois closure is degree 32 where the associated Galois group is a  $2$--group. These fields are also not locally equivalent as the ramified prime 2 has decompositions  given by
\[ (2)_K = Q_1^2Q_2^2Q_3^2Q_4^2, \quad (2)_{K'} = P_1P_2P_3^2P_4^4. \]
The inertial degree is 1 over each of these primes. Now let $A$ be a quaternion algebra over $K$ constructed as in the proof of Theorem \ref{AlgebraFlexibility}. Since $K,K'$ are arithmetically equivalent, we have a bijection of places that preserves inertial degree. Since in the proof of Theorem \ref{AlgebraFlexibility}, we only worked over unramified primes, we can define an associated algebra $A'$ over $K'$ using the proof of Theorem \ref{AlgebraFlexibility}. By the Cebotarev Density Theorem, we know that there are infinitely many primes $q$ where the inertial degree of any place $\omega$ over $q$ is at least 2. Over these primes, we certainly have $2 \mid [K_\omega:\Q_q]$. In particular, we can ramify an algebra $B/\Q$ at these places and still maintain the local equations
\[ 0 = \Inv_\omega(A) = [K_\omega:\Q_q]\Inv_q(B) = \frac{[K_\omega:\Q_q]}{2} = 0 \mod \Z. \]
Thus, we again see that $(\Res_{K/\Q})^{-1}(A)$ is infinite. Moreover, if $2 \notin \Ram(B)$, then $B \in (\Res_{K/\Q})^{-1}(A)$  if and only if $B \in (\Res_{K'/\Q})^{-1}(A')$. Consequently, 
\[ \abs{(\Res_{K/\Q})^{-1}(A) \cap (\Res_{K'/\Q})^{-1}(A')} = \iny. \]
To see that
\[ (\Res_{K/\Q})^{-1}(A) \ne  (\Res_{K'\Q})^{-1}(A'), \]
simply note that over the prime 2, we can ramify the algebra $B$ at $2$ and maintain the local equation (\ref{LocalInvariants}) for $A$ but not for $A'$. Indeed, over each of the four places over $2$ for $K$, we have
\[ [K_{\omega_1}/\Q_2] = [K_{\omega_2}/\Q_2] = [K_{\omega_3}/\Q_2] = [K_{\omega_4}/\Q_2] =  2, \]
while for $K'$, we have
\[ [K'_{\omega_1'}/\Q_2] = [K'_{\omega_2'}/\Q_2] = 1, \quad [K'_{\omega_3'}/\Q_2]=2, \quad [K'_{\omega_4'}/\Q_2] = 4. \]
Since $A,A'$ are unramified at all places over $2$, we see by (\ref{LocalInvariants}) that
\[ \Inv_{\omega_j}(A) = [K_{\omega_j}:\Q_2]\Inv_2(B) = 2\Inv_2(B) = 0 \mod \Z \]
and
\[ \Inv_{\omega_1'}(A') = \Inv_2(B) = 0 \mod \Z. \]
In the first case, clearly we can select $\Inv_2(B)$ to be either $0$ or $1/2$, while in the second case it can only be $0$. This actually provides infinite many quaternion algebras in $(\Res_{K/\Q})^{-1}(A)$ that are not in $(\Res_{K'/\Q})^{-1}(A')$.
\end{proof}

\nid For contrast, we observe the following trivial result which shows the need for working over pairs of fields $K,K'$.

\begin{lemma}\label{AlgebraRigidity}
Let $A,A'$ be central simple $K$--algebras of degree $d$ such that
\[ (\Res_{K/\Q})^{-1}(A) \cap (\Res_{K/\Q})^{-1}(A') \ne \emptyset. \]
Then $A \cong A'$.
\end{lemma}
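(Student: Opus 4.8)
The plan is essentially immediate: unwind the definition of the fibers of $\Res_{K/\Q}$ at the single assumed common element. First I would pick an algebra $B \in (\Res_{K/\Q})^{-1}(A) \cap (\Res_{K/\Q})^{-1}(A')$, which exists by hypothesis. By the definition of the restriction map on Brauer groups recalled in Section 2, the membership $B \in (\Res_{K/\Q})^{-1}(A)$ means $[B \otimes_\Q K] = [A]$ in $\Br(K)$, and likewise $B \in (\Res_{K/\Q})^{-1}(A')$ means $[B \otimes_\Q K] = [A']$ in $\Br(K)$. Combining the two equalities gives $[A] = [A']$ in $\Br(K)$.

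It then only remains to upgrade this equality of Brauer classes to an honest isomorphism of algebras. Since $A$ and $A'$ are central simple $K$--algebras of the same degree $d$, they have the same $K$--dimension $d^2$; as there is a unique division algebra in each Morita equivalence class (recalled in Section 2), equality of degrees forces $A$ and $A'$ to be the same matrix size over that common division algebra, hence $A \cong A'$. Alternatively, and more in the spirit of the rest of the section, I would compare local invariants directly: for every $\omega \in \Cal{P}_K$ one has $\Inv_\omega(A) = \Inv_\omega(B \otimes_\Q K) = \Inv_\omega(A')$ by (\ref{LocalInvariants}), so $A \cong A'$ by the Albert--Hasse--Brauer--Noether theorem. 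There is no genuine obstacle to overcome here; the only point worth stating carefully is the passage from equality of Brauer classes to isomorphism of the algebras, which is exactly where the hypothesis that $A, A'$ share the degree $d$ is used. The content of the lemma lies entirely in the contrast with Theorems \ref{MainQuat} and \ref{AlgebraFlexibility}: it shows that the flexibility exhibited there genuinely requires working with a pair of distinct fields $K, K'$, since over a single field the fibers determine the algebra.
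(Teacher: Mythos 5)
Your proposal is correct and matches the paper's argument: the paper's proof simply picks $B$ in the intersection and observes $A \cong B \otimes_\Q K \cong A'$. Your extra care in passing from equality of Brauer classes to an isomorphism (using the common degree $d$, or local invariants plus Albert--Hasse--Brauer--Noether) is a harmless elaboration of the same one-line idea.
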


\begin{proof}
We have $B/\Q$ with $B \in (\Res_{K/\Q})^{-1}(A) \cap (\Res_{K/\Q})^{-1}(A')$ such that
\[ A_1 \cong B \otimes_\Q K \cong A_2. \]
\end{proof}

\nid This trivial lemma is meant to highlight the typical setting. Namely, the difficult work in commensurability rigidity results is often proving that the associated algebraic structures giving rise to the arithmetic manifolds have the same field of definition; see \cite{CHLR}, \cite{McR} and \cite{PR}.

\section{A geometric application}

\nid We refer the reader to \cite{PLR} for the basic background on algebraic groups, forms of algebraic groups, and arithmetic lattices in these algebraic groups. The book of Witte-Morris \cite{Witte} is also provides an excellent introduction to the topic.\tss

\nid Given a number field $K/\Q$ with $r$ real places and $s$ complex places, up to complex conjugation, and a central simple $K$--algebra of degree $d$, we have the associated groups $(A \otimes_K \overline{\tau(K)})^{-1}$, where $\tau$ is one of the above real or complex places. According to Wedderburn's Structure Theorem, we know that
\[ (A\otimes_K \overline{\tau(K)})^{-1} \cong \begin{cases} \SL(d,\C), & \tau \text{ is a complex place} \\ \SL(d,\R),& \tau \text{ is a real place and }(2,d)=1 \\ \SL(d,\R),& \tau \text{ is a real place, } 2\mid d,\text{ }A \text{ splits over }\overline{\tau(K)}\\ \SL(d/2,\BB{H})& \tau \text{ is a real place, } 2\mid d,\text{ }A \text{ does not split over }\overline{\tau(K)}. \end{cases} \] 
For simplicity, we will assume that the fourth case does not happen for any real place. Any $\Cal{O}_K$--order $\Cal{O}$ in $A$ provides, by Borel--Harish-Chandra \cite{BHC}, a lattice $\Cal{O}^1$ in the group
\[ \prod_{\tau \in \Cal{P}_{K,\iny}} (A \otimes_K \overline{\tau(K)})^1 \cong (\SL(d,\R))^{r} \times (\SL(d,\C))^{s}. \]
Any field $F\su K$ and a central simple $F$--algebra $B$ of degree $d$ with $A \cong B \otimes_F K$ provides us with a subgroup of $\Cal{O}^1$ via
\[ (\Cal{O} \cap B)^1 = \Cal{O}_B^1. \]
This subgroup produces an arithmetic lattice in
\[ \prod_{\theta \in \Cal{P}_{F,\iny}} (B \otimes_F \overline{\theta(F)})^1 \]
where $\theta$ ranges over the real and complex places (up to complex conjugation) of $F$. Taking all the possible algebras $B$ over all the subfields of $F \su K$ produces, up to commensurability, all the arithmetic subgroups of $\Cal{O}^1$ arising from groups of type $(\SL(d,\R))^{r'} \times (\SL(d,\C))^{s'}$ where $r' < r$ and $s' < s$. As $F=\Q$ always happens, we can always try to produce submanifolds from $\Q$. Note that even if we have an algebra $B$ over $\Q$ with $A = B \otimes_\Q K$, the algebra $B$ only produces an arithmetic subgroup when $B$ is unramified at the real place of $\Q$. If $d$ is odd, this is always the case. Even when $d$ is not odd, we can ensure that $B$ is unramified at the archimedean place provided the algebra $A$ is unramified at every archimedean place.\tss

\nid One can use the previous section to produce manifolds with universal cover isometric to the symmetric spaces $X_{r,s}$ of $G_{r,s} = (\SL(d,\R))^r \times (\SL(d,\C))^s$ for various pairs $r,s$ and $d$. These manifolds $M_A,M_{A'}$ will have some common totally geodesic submanifolds, up to commensurability, in certain dimensions. The algebras $A,A'$ in the previous section are, by construction, unramified at all archimedean places. In this case, $r,s$ are the number of real and complex places of $K,K'$. \tss

\nid The algebra $A$ only determines $M_A$, up to commensurability. Our construction gives a relationship between the commensurability classes of certain submanifolds of any manifold commensurable to $M_A$ with any manifold commensurable to $M_{A'}$.\tss

\nid For the fields $K,K'$ and a subfield $F \su K$ or $K'$ not contained in $K \cap K'$, the field $F$ could produce totally geodesic submanifolds in the associated manifolds for the algebras $A$ or $A'$. However, these potential submanifolds cannot be immersed as totally geodesic submanifolds in both classes of manifolds. In particular, though these manifolds share a large class of totally geodesic submanifolds, they do not in general contain the same classes of totally geodesic submanifolds up to commensurability. This remarks prompts the following question.\tss

\nid \textbf{Question 3.} \emph{Do there exist incommensurable arithmetic manifolds $M_1,M_2$ with the same class of totally geodesic submanifolds, up to commensurability? Or have precisely the same totally geodesic submanifolds (with or without multiplicity), up to free homotopy?}\tss

\nid To avoid trivialities, we must insist that $M_1,M_2$ do have at least one totally geodesic submanifold beyond geodesics or flats; the work of Prasad--Rapinchuk \cite{PR} provides examples without assuming the manifolds have submanifolds beyond geodesics and flats. \tss

\section{Final remarks}

\nid One can generalize the above construction in a few different ways. First, by Komatsu \cite{Komatsu3}, there exist for any $r$, fields $K_1,\dots,K_r$ that are pairwise non-isomorphic and locally equivalent. In particular, we can produce arbitrarily large collections of algebras $A_1,\dots,A_r$ over $K_1,\dots,K_r$ that pairwise have the same fiber $(\Res_{K_j/\Q})^{-1}(A_j)$. In addition, using relative versions of arithmetic or local equivalence, we can produce examples over larger base fields than $\Q$. These constructions yield examples of manifolds that share an even richer collection of submanifolds coming from subfields of $F = K_1 \cap K_2$. In addition, taking locally equivalent fields $L_1,L_2$ with degrees divisible by other primes, we can for any prime degree produce algebras $A_1,A_2$ over $L_1,L_2$ with
\[ (\Res_{L_1/\Q})^{-1}(A_1) = (\Res_{L_2/\Q})^{-1}(A_2), \quad \abs{(\Res_{L_1/\Q})^{-1}(A_1)} = \iny. \]

\nid We suspect also that these methods work equally as well for other simple non-compact Lie groups and their associated symmetric spaces. In a forthcoming paper with Britain Cox, Benjamin Linowitz, and Nicholas Miller \cite{CMM}, we will explore these generalizations and relations in Galois cohomology. In particular, we provide a more general picture of the work in this article.\tss

\nid After completing this paper, the author learned of the work of Manny Aka \cite{Aka}. Our construction here is a generalization of his construction. Both articles makes essential use of locally equivalent fields. In \cite{CMM}, we will provide a lengthy discussion on the relation of Aka's work and the work here and in \cite{CMM}.



\nid Purdue University, West Lafayette IN 47906. \verb"dmcreyno@math.purdue.edu"

\end{document}